\newcommand{\R}{\mathbb{R}}                                     % real numbers
\newcommand{\C}{\mathbb{C}}
\newcommand{\N}{{\mathbb{N}}}                                   % complex numbers
\newcommand{\ddt}{\tfrac{\text{\normalfont d}}{\text{\normalfont d}t}} %time derivative
\newcolumntype{C}[1]{>{\centering\let\newline\\\arraybackslash\hspace{0pt}}m{#1}}
\DeclareMathOperator{\diag}{diag}
\newtheorem{assumption}{Assumption}[section]
\newtheorem{theorem}[assumption]{Theorem}
\newtheorem{lemma}[assumption]{Lemma}
\newtheorem{definition}[assumption]{Definition}
\newtheorem{remark}[assumption]{Remark}
\newcommand{\cR}{\mathcal{R}}
\newcommand{\cD}{\mathcal{D}}
\newcommand{\cF}{\mathcal{F}}
\newcommand{\eps}{\varepsilon}
\newcommand{\setdef}[2]{\left\{\, #1 \left|\, \vphantom{#1} #2\right.\right\}}
\DeclareOldFontCommand{\rm}{\normalfont\rmfamily}{\mathrm}
\DeclareMathOperator{\Gl}{\mathbf{Gl}}
\DeclareMathOperator{\rk}{\rm rk}
\newenvironment{smallpmatrix}%          environment name
{\left(\begin{smallmatrix}}%            begin code
{\end{smallmatrix}\right)}%             end code
\renewcommand*\env@matrix[1][*\c@MaxMatrixCols c]{%
  \hskip -\arraycolsep
  \let\@ifnextchar\new@ifnextchar
  \array{#1}}
\DeclareRobustCommand{\DRAWLINE}[1]{\tikz{\protect\draw[very thick,#1] (0,-0.5ex)(0,0)--(3.2ex,0);}}
\definecolor{BLUE}{rgb}{0.0,0.0,1.0}%
\definecolor{CYAN}{rgb}{0.0, 01.0, 1.0}%
\title{Asymptotic tracking by funnel control with internal models}
\author{Thomas Berger{$^*$} \and Christoph M. Hackl{$^\dagger$} \and Stephan Trenn{$^\ddag$}
\thanks{Thomas Berger acknowledges funding by the Deutsche Forschungsgemeinschaft (DFG, German Research Foundation) – Project-IDs
362536361 and 471539468.}
\thanks{$^{*}$\texttt{thomas.berger@math.upb.de}, Institut f\"ur Mathematik, Universit\"at Paderborn.}%, Warburger Stra{\ss}e~100, 33098~Paderborn, Germany}
\thanks{$^{\dagger}$\texttt{christoph.hackl@hm.edu}, Labor für mechatronische und regenerative Energiesysteme (LMRES), Hochschule München.} %, Lothstr. 64, 80335 München, Germany}
\thanks{$^{\ddag}$\texttt{s.trenn@rug.nl}, Bernoulli Institute, University of Groningen, Netherlands}}
\begin{document}

\maketitle

\thispagestyle{empty}
\pagestyle{empty}
\begin{abstract}
Funnel control achieves output tracking with guaranteed tracking performance for unknown systems and arbitrary reference signals. In particular, the tracking error is guaranteed to satisfy time-varying error bounds for all times (it evolves in the funnel). However, convergence to zero cannot be guaranteed, but the error often stays close to the funnel boundary, inducing a comparatively large feedback gain. This has several disadvantages (e.g.\ poor tracking performance and sensitivity to noise due to the underlying high-gain feedback principle). In this paper, therefore, the usually known reference signal is taken into account during funnel controller design, i.e.\ we propose to combine the well-known internal model principle with funnel control. We focus on linear systems with linear reference internal models and show that under mild adjustments of funnel control, we can achieve asymptotic tracking for a whole class of linear systems (i.e.\ without relying on the knowledge of system parameters).
\end{abstract}

\section{Introduction}
Funnel control was developed in the seminal work~\cite{IlchRyan02b}, see also the survey in~\cite{BergIlch21}. The funnel controller proved to be the appropriate tool for tracking problems in various applications such chemical processes~\cite{IlchTren04}, industrial servo-systems~\cite{Hack17}, underactuated multibody systems~\cite{BergDrue21,BergOtto19}, electrical circuits~\cite{BergReis14a,SenfPaug14}, clinical applications~\cite{PompWeye15}, and autonomous driving~\cite{BergRaue18,BergRaue20}. Funnel control only relies on ``structural system knowledge'' such as (strict) relative degree, bounded-input bounded-output zero dynamics and known sign (or positive definiteness) of the high-frequency gain. Therefore, it is intrinsically robust but also achieves ``tracking with prescribed performance'', i.e. the tracking error evolves within a prescribed region, the so-called performance funnel which is designed by a time-varying funnel boundary. However, the exact error evolution within the funnel is not known; e.g., the error may come arbitrarily close to the funnel boundary, resulting in extraordinary large gains and, therefore, from an implementation point of view, exhibiting massive noise sensitivity.

In this contribution, the problem of asymptotic tracking with concurrent prescribed transient behavior of the tracking error is investigated for linear minimum phase systems by exploiting the internal model principle~\cite{FranWonh75a,Wonh79}. The problem has already been solved for relative degree one systems in~\cite{IlchRyan06a} (using internal models as well) and \cite{LeeTren19}, and, for (nonlinear) systems with arbitrary relative degree, in~\cite{BergIlch21}. However, all three works rely on the assumption of a performance funnel whose width shrinks to zero as time goes to infinity. Hence, all three approaches still exhibit massive noise sensitivity during real-world implementation.  In the context of prescribed performance control (PPC), the asymptotic tracking objective has been tackled by a controller design comprising a locally asymptotically stabilizing controller and an additional PPC module (see e.g.~\cite{KanaRovi20}). However, this approach relies on the existence and availability of the stabilizing controller part, which typically requires some knowledge of the system parameters or the system itself (in the sense of model inversion). Moreover, to the best of our knowledge, so far internal models have not been considered in the context of PPC in general.

In the present paper, we suggest an alternative where we combine funnel control and the internal model principle. Neither do we need such knowledge of a locally stabilizing controller as required for PPC nor do we need a funnel design where the funnel width shrinks to zero as time tends to infinity. Nevertheless, our approach still guarantees asymptotic tracking. To do so, we utilize internal models associated with the reference signal (assumed to be known) and employ only one time-varying gain function depending on a number of design parameters which are chosen sufficiently large such that the objective of asymptotic tracking is achieved and the aforementioned problems are avoided during implementation. In contrast to previous works on funnel control for systems with arbitrary relative degree, as e.g.~in \cite{BergIlch21,BergLe18}, the proposed approach also avoids the involvement of several gain functions.

A related, but different approach to the problem utilizes control Lyapunov barrier functions (CLBFs), see e.g.~\cite{TeeGe09,RomdJaya16}. A drawback of this approach is that either the CLBF candidate is hard to determine and/or requires knowledge of the system parameters, or adaptive laws to approximate the uncertainties in the system parameters must be employed, which severely increase the controller complexity. Compared to this, here we present a simple controller of low complexity, which does not need any knowledge of specific system parameters.

%\red{Im PPC Kontext gibt es übrigens auch noch ein paper welches asymptotisches Tracking schafft, aber ohne den Funnel gegen Null laufen zu lassen (ich glaube es ist \cite{KanaRovi20}, aber bin mir nicht sicher, auf jeden Fall gibt es scheinbar eine Menge papers die ``asymptotic'' und ``PPC'' im Titel haben), d.h.\ da müssten wir uns noch etwas deutlicher abgrenzen.}
%\tb{Ich habe dazu mal den folgenden Abschnitt geschrieben, evtl. müsste man noch mehr Literatur aufführen.
%\newline
%In the context of prescribed performance control (PPC), the asymptotic tracking objective has been approached by a design comprised of a given locally asymptotically stabilizing controller and an additional PPC module, see e.g.~\cite{KanaRovi20}. However, this approach relies on the existence and availability of the stabilizing controller part, which typically requires some knowledge of the system parameters. In the present paper, we do not require such knowledge, but we utilize internal models associated with the reference signal. To the best of our knowledge, internal models have not been used in the context of PPC so far.}

%%%%%%%%%%%%%%%%%%%%%%%%%%%%%%%%%%%%%%%%%%%%%%%%%%%%%%%%%%%%%%%%%%%%%%%%%%%%%%%%%%%%%%%%%%%%%%%%%%%%%%%%%%%%%
\subsection{System class}\label{Ssec:SysClass}
%%%%%%%%%%%%%%%%%%%%%%%%%%%%%%%%%%%%%%%%%%%%%%%%%%%%%%%%%%%%%%%%%%%%%%%%%%%%%%%%%%%%%%%%%%%%%%%%%%%%%%%%%%%%%

%\red{Was wir noch in die Systemklasse aufnehmen können sind zusätzlich nichtlineare Terme der Form $g(t,x(t))$, welche $\|g(t,x)\| \le d(t) \|Cx\|$ mit beschränktem $d(\cdot)$ und $C A^i g(t,x) = 0$ erfüllen für $i=0,\ldots,r-2$. Mit ein paar einfachen Änderungen geht es dann durch. Ist das für die Anwendung relevant? CH: Ja, das hatte ich gemeint und ist wichtig für Anwendungen, im Prinzip entspricht das meinen Nichtlinearitäten in meiner Dissertation bzw. meinem Buch!\newline
%ST: Seid ihr sicher, dass dann auch noch das asymptotische Resultat durchgeht? Wenn ich permanent eine beschränkte Störung auf das System gebe, erwarte ich nicht, dass der Fehler gegen Null konvergiert, oder?}
%\tb{Deswegen ist die Schranke ja proportional zur Norm des Ausgangs, geht also auch gegen 0 wenn dieser gegen 0 geht. Nach meinen Überlegungen sollte es mit diesen Bedingungen durchgehen, ist aber ja eh erst etwas für das journal paper.}

We consider linear systems of the form
\begin{equation} \label{eq:System-lin}
\begin{aligned}
\dot x(t) &= A x(t) + B u(t), \quad x(0) = x^0 \in \R^n, \\
y(t) & = C x(t),
\end{aligned}
\end{equation}
where  $A \in \R^{n \times n}$, $B, C^\top\in \R^{n\times m}$, with the same number of inputs $u:\R_{\ge 0}\to\R^m$ and outputs $y:\R_{\ge 0}\to\R^m$. We assume that the system has a well-defined and known strict relative degree and it is minimum phase, cf.~\cite{IlchRyan07, Isid95}.
\begin{assumption} \label{Ass:rel_deg}
System~\eqref{eq:System-lin} has strict relative degree $r\in\N$, i.e., $CA^kB = 0$ for all $k=0,\ldots,r-2$, and  $CA^{r-1} B$ is invertible. Furthermore, it is minimum phase, i.e.,
\[
   \forall\, \lambda\in\C\ \text{ with } {\rm Re}\, \lambda \ge 0:\  \rk \begin{bmatrix} A - \lambda I_n & B \\ C & 0\end{bmatrix} = n+m.
\]
\end{assumption}

We introduce the following class of systems.

\begin{definition} \label{Def:SystemClass}
    For $m,r \in \N$,
    a system~\eqref{eq:System-lin} belongs to the class $\Sigma_{m,r}$, if \Cref{Ass:rel_deg} is satisfied and $\Gamma = CA^{r-1}B$ is positive definite. We write $(A,B,C) \in \Sigma_{m,r}$.
\end{definition}

%%%%%%%%%%%%%%%%%%%%%%%%%%%%%%%%%%%%%%%%%%%%%%%%%%%%%%%%%%%%%%%%%%%%%%%%%%%%%%%%%%%%%%%%%%%%%%%%%%%%%%%%%%%%%
\subsection{Control objective}\label{Ssec:ContrObj}
%%%%%%%%%%%%%%%%%%%%%%%%%%%%%%%%%%%%%%%%%%%%%%%%%%%%%%%%%%%%%%%%%%%%%%%%%%%%%%%%%%%%%%%%%%%%%%%%%%%%%%%%%%%%%

The objective is to design a dynamic output derivative feedback of the form
\begin{equation}\label{eq:objcontr}
\begin{aligned}
\dot{\xi}(t)\,&=F\big(t,\xi(t), y(t), \dot y(t), \ldots, y^{(r-1)}(t)\big),\quad \xi(0) = \xi^0,\\
u(t)\,&=G\big(t,\xi(t),y(t), \dot y(t), \ldots, y^{(r-1)}(t)\big),
\end{aligned}
\end{equation}
which achieves that, for any reference signal $y_{\rm ref}:\R_{\ge 0}\to\R^m$ within a certain class (defined in \Cref{Ssec:Refs}),
the tracking error $e(t)=y(t)-y_{\rm ref}(t)$ evolves within a prescribed performance funnel
\begin{equation}
\mathcal{F}_{\varphi} := \setdef{(t,e)\in\R_{\ge 0} \times\R^m}{\varphi(t) \|e\| < 1},\label{eq:perf_funnel}
\end{equation}
which is determined by a function~$\varphi$ belonging to
\[
\Phi \!:=\!
\left\{
\varphi\in  C^1(\R_{\ge 0}\to\R)
\left|\!\!\!
\begin{array}{l}
\text{ $\varphi, \dot \varphi$ are bounded,}\\
\text{ $\varphi (t)>0$ for all $t> 0$,}\\
 \text{ and }  \liminf_{t\rightarrow \infty} \varphi(t) > 0
\end{array}
\right.\!\!\!
\right\}.
\]
Furthermore, all signals~$x, u, z$ in the closed-loop system should remain bounded and asymptotic tracking should be achieved, i.e.,
$
    \lim_{t\to\infty} e(t) = 0.
$
The funnel boundary is given by the reciprocal of $\varphi$ as depicted in Fig.~\ref{Fig:funnel}. If $\varphi(0)=0$ , then there is no restriction on the initial value since $\varphi(0) \|e(0)\| < 1$ and the funnel boundary $1/\varphi$ has a pole at $t=0$.

\begin{figure}[h]
\captionsetup[subfloat]{labelformat=empty}
\hspace{1.5cm}
\begin{tikzpicture}[scale=0.35]
\tikzset{>=latex}
  %\draw[thick,->] (0,-6)--(0,6);
  \filldraw[color=gray!15] plot[smooth] coordinates {(0.15,4.7)(0.7,2.9)(4,0.4)(6,1.5)(9.5,0.4)(10,0.333)(10.01,0.331)(10.041,0.3) (10.041,-0.3)(10.01,-0.331)(10,-0.333)(9.5,-0.4)(6,-1.5)(4,-0.4)(0.7,-2.9)(0.15,-4.7)};
  \draw[thick] plot[smooth] coordinates {(0.15,4.7)(0.7,2.9)(4,0.4)(6,1.5)(9.5,0.4)(10,0.333)(10.01,0.331)(10.041,0.3)};
  \draw[thick] plot[smooth] coordinates {(10.041,-0.3)(10.01,-0.331)(10,-0.333)(9.5,-0.4)(6,-1.5)(4,-0.4)(0.7,-2.9)(0.15,-4.7)};
  \draw[thick,fill=lightgray] (0,0) ellipse (0.4 and 5);
  \draw[thick] (0,0) ellipse (0.1 and 0.333);
  \draw[thick,fill=gray!15] (10.041,0) ellipse (0.1 and 0.333);
  \draw[thick] plot[smooth] coordinates {(0,2)(2,1.1)(4,-0.1)(6,-0.7)(9,0.25)(10,0.15)};
  \draw[thick,->] (-2,0)--(12,0) node[right,above]{\normalsize$t$};
  \draw[thick,dashed](0,0.333)--(10,0.333);
  \draw[thick,dashed](0,-0.333)--(10,-0.333);
  \node [black] at (0,2) {\textbullet};
  \draw[->,thick](4,-3)node[right]{\normalsize$\lambda$}--(2.5,-0.4);
  \draw[->,thick](3,3)node[right]{\normalsize$(0,e(0))$}--(0.07,2.07);
  \draw[->,thick](9,3)node[right]{\normalsize$\varphi(t)^{-1}$}--(7,1.4);
\end{tikzpicture}
\caption{Error evolution in a funnel $\mathcal F_{\varphi}$ with boundary $\varphi(t)^{-1}$.}
\label{Fig:funnel}
\end{figure}
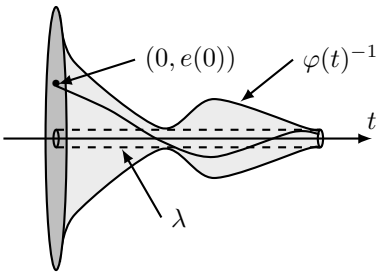

We like to point out that, in contrast to~\cite{BergIlch21,IlchRyan06a}, the objective of asymptotic tracking cannot be achieved by shrinking the width of the funnel to zero as $t\to\infty$; this would mean that $\varphi$ becomes unbounded which is not allowed by definition of the class $\Phi_r$. Furthermore, such an approach would drastically increase noise sensitivity. This problem is also avoided in the present paper.
%As discussed in~\cite[Rem.~1.7]{BergIlch21}, this leads to numerical issues with the implementation when the product of ``infinitely large'' and ``infinitesimally small'' terms needs to be computed, which is ill-conditioned. This problem is also avoided in the present paper.  %\red{ST: Wenn man den relativen Fehler betrachtet dann nicht, siehe \cite{LeeTren19}.} \tb{Siehe Kommentar oben.} Therefore, we seek to avoid this problem in the present paper.

In fact, boundedness of $\varphi$ implies that there exists $\lambda>0$ such that $1/\varphi(t)\geq\lambda$ for all $t \ge 0$, so each performance funnel $\mathcal{F}_{\varphi}$ is bounded away from zero. The funnel boundary is not necessarily monotonically decreasing, which might be advantageous in applications. In some situations widening the funnel over some later time interval might be beneficial, for instance in the presence of periodic disturbances or strongly varying reference signals.  %or input constraints~\cite{2007_Hackl_NonidentifierBasedAdaptiveControlwithSaturatedControlInputCompensation}. TB: Input constraints is ein Thema, was ich hier lieber nicht ansprechen würde.

%%%%%%%%%%%%%%%%%%%%%%%%%%%%%%%%%%%%%%%%%%%%%%%%%%%%%%%%%%%%%%%%%%%%%%%%%%%%%%%%%%%%%%%%%%%%%%%%%%%%%%%%%%%%%
\subsection{Class of reference signals}\label{Ssec:Refs}
%%%%%%%%%%%%%%%%%%%%%%%%%%%%%%%%%%%%%%%%%%%%%%%%%%%%%%%%%%%%%%%%%%%%%%%%%%%%%%%%%%%%%%%%%%%%%%%%%%%%%%%%%%%%%

The reference signals to be tracked are functions $y_{\rm ref}:\R_{\ge 0}\to\R^m$, whose components $y_{i,\rm ref}$, $i=1,\ldots,m$, are solutions of the scalar differential equation $\alpha(\ddt) y_{i,\rm ref} = 0$, where $\alpha(s)\in\R[s]$ is a monic polynomial with the following property:
\begin{equation}\label{eq:prop_alpha}
    \forall\, \lambda\in\C:\ \alpha(\lambda) = 0\ \implies\ \rk \begin{bmatrix} A - \lambda I_n & B \\ C & 0\end{bmatrix} = n+m.
\end{equation}
In other words,  $y_{\rm ref}$ belongs to the class
\[
    \cR(\alpha) := \setdef{w\in C^\infty(\R_{\ge 0},\R^m)}{\alpha(\ddt) w = 0}.
\]
For example, admissible reference signals are constants, ramps, polynomials, sinusoidals and linear combinations thereof~(see, e.g., \cite[Section 7.3.1]{Hack17}). For those cases, $\alpha$ is chosen to have purely imaginary roots, so that condition~\eqref{eq:prop_alpha} is automatically satisfied for minimum-phase systems. The same is true for unbounded exponential reference signals, however, for exponentially decreasing reference signals condition~\eqref{eq:prop_alpha} may be violated and, since the system parameters are not assumed to be known, this situation cannot be detected. On the other hand, the case of exponentially decreasing reference signals is usually not of practical relevance and, furthermore, a random small perturbation of the roots of $\alpha(s)$ makes condition~\eqref{eq:prop_alpha} valid (with probability one).
%\red{Den folgenden Absatz verstehe ich nicht; wenn man die Referenz kennt, kann man sehr einfach (4) erfüllen, wenn man weiß, dass das System in Klasse  $\Sigma_{m,r}$ ist. }
%We like to note that condition~\eqref{eq:prop_alpha} can typically not be checked when the system parameters are unknown~-- which is the standard assumption in funnel control. However, the choice of $\alpha(s)$ is up to the designer and choosing the roots of $\alpha(s)$ randomly guarantees that condition~\eqref{eq:prop_alpha} is satisfied with probability one. Furthermore, in view of Assumption~\ref{Ass:rel_deg} only roots with negative real part might be critical. \red{those are NOT admissible!}

%%%%%%%%%%%%%%%%%%%%%%%%%%%%%%%%%%%%%%%%%%%%%%%%%%%%%%%%%%%%%%%%%%%%%%%%%%%%%%%%%%%%%%%%%%%%%%%%%%%%%%%%%%%%%
%\subsection{Organization of the present paper}\label{Ssec:Orga}
%%%%%%%%%%%%%%%%%%%%%%%%%%%%%%%%%%%%%%%%%%%%%%%%%%%%%%%%%%%%%%%%%%%%%%%%%%%%%%%%%%%%%%%%%%%%%%%%%%%%%%%%%%%%%

%%%%%%%%%%%%%%%%%%%%%%%%%%%%%%%%%%%%%%%%%%%%%%%%%%%%%%%%%%%%%%%%%%%%%%%%%%%%%%%%%%%%%%%%%%%%%%%%%%%%%%%%%%%%%
\section{Internal models}\label{Sec:IntMod}
%%%%%%%%%%%%%%%%%%%%%%%%%%%%%%%%%%%%%%%%%%%%%%%%%%%%%%%%%%%%%%%%%%%%%%%%%%%%%%%%%%%%%%%%%%%%%%%%%%%%%%%%%%%%%

In a series of seminal works by Francis and Wonham, see e.g.~\cite{FranWonh75a}, the \textit{internal model principle} was developed,  succinctly summarized in~\cite[p.~210]{Wonh79} as

\begin{quote}
    ``every good regulator must incorporate a model of the outside
world  [\ldots \textit{being capable to replicate} \ldots] the dynamic structure of the exogenous signals which the regulator is required
to process''.
\end{quote}

The goal of the internal model is to allow for reduplication of reference signals of class $\cR(\alpha)$. For real-time implementation, a state space realization of the internal model is required. The internal model can be designed as follows:

\textit{Step 1.} For a monic polynomial $\alpha(s)\in\R[s]$ find a Hurwitz polynomial\footnote{A\! polynomial\! $\beta(s)\in\R[s]$\! is\! Hurwitz,\! if\! all\! its\! roots\! have\! negative\! real\! part.} $\beta(s)\in\R[s]$ such that $\alpha(s)$ and $\beta(s)$ are coprime, $\deg \alpha(s) = \deg \beta(s) =: p$ and $\lim_{s\to \infty} \frac{\beta(s)}{\alpha(s)} = 1$.

\textit{Step 2.} Find a minimal realization $(\hat A, \hat b, \hat c, 1)$ of $\frac{\beta(s)}{\alpha(s)}$ with $\hat A\in\R^{p\times p}$ and $\hat b, \hat c^\top \in\R^p$. Then with $\tilde A := \diag(\hat A,\ldots, \hat A)\in\R^{mp\times mp}$, $\tilde B := \diag(\hat b,\ldots,\hat b)\in\R^{mp\times m}$ and $\tilde C := \diag(\hat c,\ldots,\hat c)\in\R^{m\times mp}$ we have that the system
\begin{equation}\label{eq:int_mod}
\begin{aligned}
    \dot z(t)&= \tilde A z(t) + \tilde B w(t),\quad z(0) = z^0\in\R^{mp},\\
    u(t) &= \tilde C z(t) + I_m w(t)
\end{aligned}
\end{equation}
is a minimal realization of $\tfrac{\beta(s)}{\alpha(s)} I_m$, i.e., $(\tilde A, \tilde B,\tilde C, I_m)$ is controllable and observable and
\[
    \tilde C (sI_m -\tilde A)^{-1} \tilde B +I_m = \frac{\beta(s)}{\alpha(s)} I_m.
\]

For more details on the design procedure see also~\cite[Sec.~7.3]{Hack17}. The system~\eqref{eq:int_mod} or, equivalently, $(\tilde A, \tilde B,\tilde C, I_m)$ will be called internal model (of the class $\cR(\alpha)$) in the following. We summarize some important properties of the interconnection of the internal model with the linear system~\eqref{eq:System-lin}, which is illustrated in Fig.~\ref{fig:Illustration of FC+IM+SYS}.

\begin{figure}
\centering
\hspace*{-8mm}
\scalebox{0.9}{%
\tikzstyle{block} = [draw, fill=white, rectangle,
    minimum height=1.5cm, minimum width=3em, align=center,text width = 1.5cm]
\tikzstyle{sum} = [draw, fill=white, circle, node distance=1cm]
\tikzstyle{input} = [coordinate]
\tikzstyle{output} = [coordinate]
\tikzstyle{pinstyle} = [pin edge={to-,thin,black}]
%
% The block diagram code is probably more verbose than necessary
\begin{tikzpicture}[auto, node distance=2cm,>=latex']
    % We start by placing the blocks
    \node [input, name=input] {};
    \node [sum, right of=input] (sum) {};
    \node [block, right of=sum,node distance=1.5cm] (controller) { Funnel control\-ler \eqref{eq:fun-con}};
    \node [block, right of=controller, %pin={[pinstyle]above:System},
            node distance=2.5cm] (internalmodel) {Internal model \eqref{eq:int_mod}};
\node [block, right of=internalmodel, %pin={[pinstyle]above:System},
            node distance=2.5cm] (system) {Linear system~\eqref{eq:System-lin}};
    % We draw an edge between the controller and system block to
    % calculate the coordinate u. We need it to place the measurement block.
    \draw [->] (controller) -- node[name=w] {$w$} (internalmodel);
    \draw [->] (internalmodel) -- node[name=u] {$u$} (system);
    \node [output, right of=system] (output) {};
    \coordinate [below of=w] (measurements) {};
    % Once the nodes are placed, connecting them is easy.
    \draw [draw,->] (input) -- node {$-y_{\rm ref}$} (sum);
    \draw [->] (sum) -- node {$e$} (controller);
    \draw [->] (system) -- node [name=y] {$y$}(output);
    \draw [-] (y) |- (measurements);
    \draw [->] (measurements) -| node[pos=0.99] {} (sum);
    \draw [dotted,draw=black] ($(u.south)-(2.3,0.9)$) rectangle ($(u)+(2.3,1.2)$);
    \node [align=right] at ($(u.south)+(0.8,1.1)$) {Augmented system};
\end{tikzpicture}
}
\caption{Illustration of control system with internal model.}
\label{fig:Illustration of FC+IM+SYS}
\end{figure}
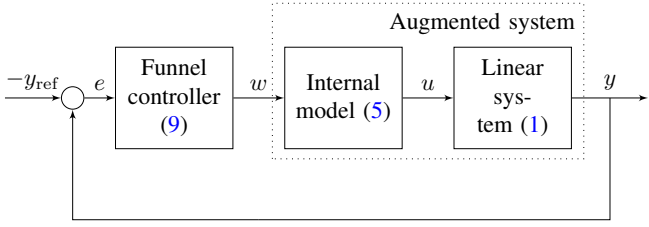

\begin{lemma}\label{Lem:Prop_Intercon}
Consider a system~\eqref{eq:System-lin} with $(A,B,C)\in\Sigma_{m,r}$, let $\alpha(s)\in\R[s]$ be a monic polynomial and $(\tilde A, \tilde B,\tilde C, I_m)$ be an internal model of the class $\cR(\alpha)$. Then the serial interconnection of~\eqref{eq:System-lin} and~\eqref{eq:int_mod}, given by
\begin{equation}\label{eq:Intercon}
\begin{aligned}
    \begin{pmatrix} \dot x(t)\\ \dot z(t)\end{pmatrix}&= A_{\rm ic} \begin{pmatrix} x(t)\\ z(t)\end{pmatrix} + B_{\rm ic} w(t),\\
    y(t) &= C_{\rm ic} \begin{pmatrix} x(t)\\ z(t)\end{pmatrix},\qquad \begin{pmatrix} x(0)\\ z(0)\end{pmatrix} = \begin{pmatrix} x^0\\ z^0\end{pmatrix}
\end{aligned}
\end{equation}
where
\[
   (A_{\rm ic}, B_{\rm ic}, C_{\rm ic}) := \left(\begin{bmatrix} A & B\tilde C\\ 0 & \tilde A\end{bmatrix}, \begin{bmatrix} B\\ \tilde B\end{bmatrix},\begin{bmatrix} C & 0\end{bmatrix}\right),
\]
is in class $\Sigma_{m,r}$ with $C_{\rm ic} A_{\rm ic}^{r-1} B_{\rm ic} = C A^{r-1} B = \Gamma$.
\end{lemma}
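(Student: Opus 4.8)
I need to verify three things about the interconnected system $(A_{\rm ic}, B_{\rm ic}, C_{\rm ic})$: that it has strict relative degree $r$, that the high-frequency gain equals $\Gamma = CA^{r-1}B$ (which is positive definite), and that it is minimum phase. The relative-degree and high-frequency-gain claims should be settled by a direct computation, while the minimum-phase property will be the main obstacle.

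The plan is to first establish the relative-degree and high-frequency-gain claims together, since the defining condition $C_{\rm ic}A_{\rm ic}^{k}B_{\rm ic} = 0$ for $k = 0,\dots,r-2$ and invertibility of $C_{\rm ic}A_{\rm ic}^{r-1}B_{\rm ic}$ is exactly the Markov-parameter computation. Because $A_{\rm ic}$ is block upper triangular with $(1,1)$-block $A$ and $(1,2)$-block $B\tilde C$, while $C_{\rm ic} = \begin{bmatrix} C & 0\end{bmatrix}$ only reads off the $x$-component, I would prove by induction on $k$ that the first block-row of $A_{\rm ic}^{k}$ has the form $\begin{bmatrix} A^{k} & \ast\end{bmatrix}$ for some matrix $\ast$ in the off-diagonal block, so that $C_{\rm ic}A_{\rm ic}^{k} = \begin{bmatrix} CA^{k} & \ast'\end{bmatrix}$. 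Then $C_{\rm ic}A_{\rm ic}^{k}B_{\rm ic} = CA^{k}B + (\text{off-diagonal term})\cdot\tilde B$. The key observation is that the off-diagonal contribution, when expanded, is itself a sum of Markov parameters $CA^{j}B$ with $j < k$ (coming from the $B\tilde C$ coupling followed by powers of $\tilde A$), so for $k \le r-2$ every such term vanishes by the relative-degree-$r$ assumption on $(A,B,C)$. This forces $C_{\rm ic}A_{\rm ic}^{k}B_{\rm ic} = CA^{k}B = 0$ for $k \le r-2$ and $C_{\rm ic}A_{\rm ic}^{r-1}B_{\rm ic} = CA^{r-1}B = \Gamma$, giving both the strict relative degree $r$ and the high-frequency gain, and positive definiteness is inherited from $(A,B,C)\in\Sigma_{m,r}$.

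For the minimum-phase property I must show that the Rosenbrock system matrix pencil of the interconnection has full column rank $n + mp + m$ for every $\lambda$ with $\operatorname{Re}\lambda \ge 0$. The pencil is
\[
P_{\rm ic}(\lambda) = \begin{bmatrix} A - \lambda I_n & B\tilde C & B\\ 0 & \tilde A - \lambda I_{mp} & \tilde B\\ C & 0 & 0\end{bmatrix}.
\]
The strategy is to relate a kernel vector $(v, w, s)^\top$ of $P_{\rm ic}(\lambda)$ to the kernels of the pencils of the original system and of the internal model. The hard part will be handling the two cases according to whether $\lambda$ is an eigenvalue of the internal model. Since $(\tilde A,\tilde B,\tilde C,I_m)$ realizes $\tfrac{\beta(s)}{\alpha(s)}I_m$ with $\beta$ Hurwitz, the poles of the internal model are the roots of $\alpha$ and the zeros are the roots of $\beta$, which lie strictly in the open left half-plane; hence for $\operatorname{Re}\lambda \ge 0$ the internal-model transfer matrix $\tfrac{\beta(\lambda)}{\alpha(\lambda)}I_m$ is either well-defined and nonsingular or $\lambda$ is a root of $\alpha$. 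In the first case I can eliminate the internal-model state and reduce full-column-rank of $P_{\rm ic}(\lambda)$ to full-column-rank of the original system pencil at $\lambda$, which holds by Assumption~\ref{Ass:rel_deg}. In the second case, $\lambda$ is a root of $\alpha$, and here property~\eqref{eq:prop_alpha} guarantees that the original system pencil still has full column rank $n+m$ at $\lambda$; combining this with controllability of $(\tilde A,\tilde B)$ (so $\begin{bmatrix}\tilde A - \lambda I & \tilde B\end{bmatrix}$ has full row rank) should force the kernel vector to be zero. Assembling both cases yields that $P_{\rm ic}(\lambda)$ has full column rank for all $\operatorname{Re}\lambda\ge 0$, which is the minimum-phase condition, completing the proof that $(A_{\rm ic},B_{\rm ic},C_{\rm ic})\in\Sigma_{m,r}$.
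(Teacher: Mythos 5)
Your computation of the Markov parameters is correct and complete: writing $A_{\rm ic}^k = \begin{smallbmatrix} A^k & M_k\\ 0 & \tilde A^k\end{smallbmatrix}$ with $M_k = \sum_{j=0}^{k-1}A^jB\tilde C\tilde A^{k-1-j}$ gives $C_{\rm ic}A_{\rm ic}^kB_{\rm ic} = CA^kB + \sum_{j=0}^{k-1}CA^jB\,\tilde C\tilde A^{k-1-j}\tilde B$, and the sum vanishes for every $k\le r-1$, which yields both the relative degree and $C_{\rm ic}A_{\rm ic}^{r-1}B_{\rm ic}=\Gamma$ with positive definiteness inherited. (The paper itself does not spell any of this out; it only cites the SISO version in Hackl's book, so your argument is precisely the ``straightforward extension'' it alludes to.)

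There is, however, a gap in the second case of your minimum-phase argument ($\alpha(\lambda)=0$, $\operatorname{Re}\lambda\ge 0$). After using full column rank of the plant pencil to conclude $v=0$ and $\tilde C w + s = 0$, the remaining equations are $(\tilde A-\lambda I)w+\tilde Bs=0$ together with $s=-\tilde Cw$, i.e.\ $(\tilde A-\tilde B\tilde C-\lambda I)w=0$. Controllability of $(\tilde A,\tilde B)$ does \emph{not} force $w=0$ here: $\begin{smallbmatrix}\tilde A-\lambda I&\tilde B\end{smallbmatrix}$ always has an $m$-dimensional kernel, and for $\lambda$ a root of $\beta$ the square matrix $\begin{smallbmatrix}\tilde A-\lambda I&\tilde B\\ \tilde C& I_m\end{smallbmatrix}$ is genuinely singular even though $(\tilde A,\tilde B)$ is controllable. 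The missing ingredient is that the internal model has no invariant zeros in the closed right half-plane: by minimality, $\det(sI-\hat A+\hat b\hat c)=\alpha(s)\bigl(1+\hat c(sI-\hat A)^{-1}\hat b\bigr)=\beta(s)$, so $\sigma(\tilde A-\tilde B\tilde C)$ is exactly the set of roots of $\beta$, which lies in $\C_-$ because $\beta$ is Hurwitz. Thus the Hurwitz property of $\beta$, which you invoke only in case~1, is equally essential in case~2. Two smaller remarks: (a) the lemma does not assume~\eqref{eq:prop_alpha}, and you do not need it --- for $\operatorname{Re}\lambda\ge0$ the full column rank of the plant pencil already follows from the minimum-phase part of \Cref{Ass:rel_deg}; (b) the case distinction can be avoided entirely via the polynomial identity $\det\begin{smallbmatrix}A_{\rm ic}-\lambda I&B_{\rm ic}\\C_{\rm ic}&0\end{smallbmatrix}=\pm\,\beta(\lambda)^m\det\begin{smallbmatrix}A-\lambda I&B\\C&0\end{smallbmatrix}$, obtained by Schur complements on the dense set where $\lambda\notin\sigma(A)\cup\sigma(\tilde A)$ and extended by continuity, from which the minimum-phase property is immediate.
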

\begin{proof}
  %Utilizing the block structure of $(A_{\rm ic},B_{\rm ic}, C_{\rm ic})$ together with $CA^{i}B=0$ for $i=0,1,\ldots,r-2$ it can easily be verified that $(A_{\rm ic}, B_{\rm ic}, C_{\rm ic})$ has relative degree $r$. The minimum phase property follows from the fact that $(\tilde{A},\tilde{B})$ is assumed to be controllable and hence adds  no additional (potentially unstable) zero dynamics, cf.\ \cite[Lem.~7.2]{Hack17} \red{ST: please double check the last claim}.
  The proof is a straightforward extension of that of~\cite[Lem.~7.2]{Hack17} to the multivariable case.
\end{proof}

If~\eqref{eq:Intercon} belongs to $\Sigma_{m,r}$ and hence has strict relative degree $r\in\N$, then by~\cite[Lem.~3.5]{IlchRyan07} there exists a state-space transformation $U\in\Gl_{n+mp}(\R)$ such that $U \begin{smallpmatrix} x(t)\\ z(t)\end{smallpmatrix} = \big( y(t)^\top, \dot y(t)^\top, \ldots, y^{(r-1)}(t)^\top, \eta(t)^\top\big)^\top$, with $\eta:\R_{\ge 0}\to\R^{n+(p-r)m}$, transforms~\eqref{eq:Intercon}  into Byrnes-Isidori form
\begin{equation} \label{eq:BIF}
\begin{aligned}
y^{(r)}(t) & = \sum_{i=1}^{r} R_i y^{(i-1)}(t) + S \eta(t) + \Gamma w(t), \\
\dot \eta(t) & = Q \eta(t) + P y(t),
\end{aligned}
\end{equation}
with initial conditions
\begin{equation}\label{eq:IC}
\begin{aligned}
(y(0),\ldots,y^{(r-1)}(0) ) &= (y_0^0,\ldots,y_{r-1}^0 ) \in \R^{rm}, \\
 \eta(0) &= \eta^0 \in \R^{n+(p-r)m},
\end{aligned}
\end{equation}
where $R_i \in \R^{m \times m}$, $i=1,\ldots,r$, $S,P^\top \in \R^{m \times (n+(p-r)m)}$, $Q \in \R^{(n+(p-r)m) \times (n+(p-r)m)}$ and $\Gamma = CA^{r-1} B$. Furthermore, since~\eqref{eq:Intercon} is minimum phase it follows that $\sigma(Q)\subseteq\C_-$. The second equation in~\eqref{eq:BIF} describes the internal dynamics of~\eqref{eq:Intercon}. If $y=0$ these dynamics are called zero dynamics. For an extensive discussion of the minimum phase property and its relation to the zero dynamics we refer to~\cite{IlchWirt13}.

%%%%%%%%%%%%%%%%%%%%%%%%%%%%%%%%%%%%%%%%%%%%%%%%%%%%%%%%%%%%%%%%%%%%%%%%%%%%%%%%%%%%%%%%%%%%%%%%%%%%%%%%%%%%%
\section{Controller design}\label{Sec:ContrDes}
%%%%%%%%%%%%%%%%%%%%%%%%%%%%%%%%%%%%%%%%%%%%%%%%%%%%%%%%%%%%%%%%%%%%%%%%%%%%%%%%%%%%%%%%%%%%%%%%%%%%%%%%%%%%%

In order to achieve the control objective described in \Cref{Ssec:ContrObj} we introduce the following novel funnel controller with additional adaptive gain terms, which is to be applied to the interconnection of~\eqref{eq:System-lin} with an internal model~\eqref{eq:int_mod} of the class $\cR(\alpha)$, where $y_{\rm ref}\in \cR(\alpha)$: %\red{wollten wir hier nicht eine $\lambda$-tracking artige Adaption einführen?}
\begin{equation}\label{eq:fun-con}
\boxed{
\begin{aligned}
    e_1(t) &= e(t) = y(t) - y_{\rm ref}(t),\\
    e_{i+1}(t) &= \dot e_i(t) + k_i e_i(t),\quad i=1,\ldots,r-1,\\
    %\dot \theta(t) &= \red{ \max\{\|e_r(t)\|^2- \lambda, 0\} },\quad \theta(0) = \theta^0,\\
    k(t) &= \frac{k_r}{1-\varphi_r(t)^2 \|e_r(t)\|^2},\\
    w(t)&= -k(t) e_r(t)
\end{aligned}
}
\end{equation}
with the controller design parameters
\begin{equation}\label{eq:con-param}
\boxed{
\begin{aligned}
    k_1,\ldots,k_{r} > 0,\ \varphi_r\in\Phi.
\end{aligned}
}
\end{equation}

%\red{ST: I find it irritating that the index for $e_i$ starts with $i=1$, because $e_i$ is strongly related to the derivatives of the actual error $e=y-y_{\rm ref}$, but now there is an offset, i.e. $e_i$ is related to $e^{(i-1)}$. I personally find it much more intuitive to start with $e_0=y-y_{\rm ref}$ instead of $e_1 = y - y_{\rm ref}$. But this means changing a lot of indices later and I am not sure whether this is worth the effort.}
%\tb{Tatsächlich hatten wir $e_0,\ldots,e_{r-1}$ in unserem Paper~\cite{BergLe18} benutzt. Dann würde der input aber von $e_{r-1}$ abhängen und dies wäre auch die am häufigsten genutzte Größe. Irgendwann empfand ich es als angenehmer da $e_r$ zu haben und nicht die ``-1'' noch mitschleppen zu müssen.}\\
Compared to standard funnel control designs~\cite{BergIlch21,BergLe18}, %the controller~\eqref{eq:fun-con} contains an additional gain term $\theta(t)$ (which is set to one in classical funnel control) and which is used here to ensure that the minimal value of the gain $k(t)$ increases over time, so that the objective of asymptotic tracking, i.e., $e(t)\to 0$, is achieved. Furthermore,
the gains $k_1,\ldots,k_{r}$ in the controller~\eqref{eq:fun-con} are selected as constants, which need to be sufficiently large~-- for $k_1,\ldots,k_{r-1}$ this will be made explicit in due course. The gain $k(t)$ is still  time-varying and increases whenever the error $e_r$ is close to the boundary of the performance funnel $\cF_{\varphi_r}$, so that evolution inside the funnel is guaranteed. We will show that this also ensures that the tracking error~$e$ evolves in a prescribed performance funnel, when $\varphi_r$ is chosen accordingly. The feasibility of asymptotic tracking is ensured by the incorporation of the internal model of the reference signal, which renders the tracking problem a stabilization problem for the interconnected system~\eqref{eq:Intercon} with output~$e$.

We like to note that the actual controller consists of the combination of the internal model~\eqref{eq:int_mod} with the funnel controller~\eqref{eq:fun-con}, which is a dynamic output derivative feedback of the form~\eqref{eq:objcontr}. In the sequel we investigate existence of solutions of the initial value problem resulting from the application of the funnel controller~\eqref{eq:fun-con} to the interconnection~\eqref{eq:Intercon}. By a solution of~\eqref{eq:Intercon},~\eqref{eq:fun-con} we mean a function $(x,z):[0,\omega)\to\R^n\times\R^{mp}$, $\omega\in(0,\infty]$,
which is locally absolutely continuous and satisfies $x(0)=x^0$, $z(0)=z^0$, as well as the differential equations in~\eqref{eq:Intercon},~\eqref{eq:fun-con} for almost all $t\in [0,\omega)$.  A solution is called maximal, if it has no right extension that is also a solution.

%%%%%%%%%%%%%%%%%%%%%%%%%%%%%%%%%%%%%%%%%%%%%%%%%%%%%%%%%%%%%%%%%%%%%%%%%%%%%%%%%%%%%%%%%%%%%%%%%%%%%%%%%%%%%
\section{Funnel control -- main result}\label{Sec:Main}
%%%%%%%%%%%%%%%%%%%%%%%%%%%%%%%%%%%%%%%%%%%%%%%%%%%%%%%%%%%%%%%%%%%%%%%%%%%%%%%%%%%%%%%%%%%%%%%%%%%%%%%%%%%%%

Before stating the main result, we discuss how the gains $k_1,\ldots,k_{r-1}$ must be chosen so that the evolution of the tracking error in a performance funnel $\cF_{\varphi_1}$ for some $\varphi_1\in\Phi$ is guaranteed.  To this end, choose $\varphi_2,\ldots,\varphi_r\in \Phi$ and $k_1,\ldots,k_{r-1}>0$ such that
\begin{enumerate}
    \item[(K1)] $k_i > \left\|\frac{\dot\varphi_i}{ \varphi_{i}}\right\|_\infty +  \left\|\frac{\varphi_i}{\varphi_{i+1}}\right\|_\infty$ for $i=1,\ldots,r-1$,
    \item[(K2)] $\varphi_i(0) \|e_i(0)\| < 1$ for $i=1,\ldots,r$.
\end{enumerate}
Note that, by definition, $e_i$ depends on $k_1,\ldots,k_{i-1}$, so both (K1) and (K2) are conditions relating the functions $\varphi_i$ and the gains~$k_i$. Invoking the control law~\eqref{eq:fun-con}, we can make the following observation, which is independent from the application to a specific system.

\begin{lemma}\label{lem:epsi}
Let $y,y_{\rm ref}\in C^{r-1}([0,\omega),\R^m)$, $\omega\in(0,\infty]$, and consider the signals defined in the control law~\eqref{eq:fun-con} for design parameters as in~\eqref{eq:con-param}. If $\varphi_r(t) \|e_r(t)\| < 1$ for all $t\in [0,\omega)$, then for all $\varphi_1,\ldots,\varphi_{r-1}\in\Phi$ which satisfy~(K1) and~(K2) we have for $i=1,\ldots,r-1$:
\begin{equation}\label{eq:est-ki}
\forall\,t\ge 0:\  \varphi_i(t) \|e_i(t)\| \leq \eps_i < 1,
\end{equation}
where
\[
  \eps_i :=
\max\left\{\varphi_i(0)\|e_i(0)\|, \tfrac{1}{k_i} \left(\left\|\tfrac{\dot\varphi_i}{ \varphi_{i}}\right\|_\infty +  \left\|\tfrac{\varphi_i}{\varphi_{i+1}}\right\|_\infty\right)\right\}.
\]
In particular, the signals $e_i$ evolve within the performance funnel $\cF_{\varphi_i}$.
\end{lemma}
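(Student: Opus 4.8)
The plan is to establish \eqref{eq:est-ki} by downward induction on $i$, starting from the standing hypothesis $\varphi_r(t)\|e_r(t)\| < 1$ and descending to $i=1$. Throughout I abbreviate $\psi_i := \varphi_i\|e_i\|$, $a_i := \|\dot\varphi_i/\varphi_i\|_\infty$ and $b_i := \|\varphi_i/\varphi_{i+1}\|_\infty$, so that (K1) reads $k_i > a_i + b_i$ and $\eps_i = \max\{\psi_i(0), (a_i+b_i)/k_i\}$. The inductive claim for each $i\in\{1,\ldots,r-1\}$ is: \emph{if} $\psi_{i+1}(t)<1$ for all $t$, \emph{then} $\psi_i(t)\le\eps_i<1$ for all $t$. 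The base case $i=r-1$ invokes the hypothesis $\psi_r<1$ directly; in the step, the conclusion $\psi_{i+1}\le\eps_{i+1}<1$ at the next index supplies the required $\psi_{i+1}<1$.

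First I would settle the regularity needed to differentiate. Since $y,y_{\rm ref}\in C^{r-1}$ and $e_{i+1}=\dot e_i + k_i e_i$, an easy induction gives $e_i\in C^{r-i}$, so every $e_i$ with $i\le r-1$ is $C^1$; to sidestep the nonsmoothness of $\|e_i\|$ at its zeros I would work with the squared quantity $\phi_i := \psi_i^2 = \varphi_i^2\|e_i\|^2$, which is genuinely $C^1$. Using $\dot e_i = e_{i+1}-k_i e_i$ and $e_i^\top e_{i+1}\le\|e_i\|\,\|e_{i+1}\|$ (Cauchy--Schwarz), a direct computation gives
\[
\dot\phi_i = 2\tfrac{\dot\varphi_i}{\varphi_i}\phi_i + 2\varphi_i^2 e_i^\top e_{i+1} - 2k_i\phi_i \;\le\; 2\psi_i\big[(a_i-k_i)\psi_i + b_i\big],
\]
where I used $\varphi_i^2\|e_i\|\,\|e_{i+1}\| = \psi_i\cdot\tfrac{\varphi_i}{\varphi_{i+1}}\psi_{i+1}\le b_i\psi_i$ (this is where the induction hypothesis $\psi_{i+1}<1$ enters) together with $\dot\varphi_i/\varphi_i\le a_i$ and $\phi_i\ge 0$.

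The heart of the argument is a sublevel-set (maximum) argument exploiting the sign of the bracket. Setting $\psi_i^* := b_i/(k_i-a_i)$, which is well-defined and positive because $k_i>a_i$ by (K1), the bracket $(a_i-k_i)\psi_i + b_i$ is strictly negative whenever $\psi_i>\psi_i^*$, so $\dot\phi_i<0$ there. Writing $M_i := \max\{\psi_i(0),\psi_i^*\}$, if $\phi_i$ ever exceeded $M_i^2$, then taking the last time $t_0$ before such an excursion at which $\phi_i(t_0)=M_i^2$ would force $\phi_i$ to strictly decrease on the ensuing interval, a contradiction; spelling this out yields $\psi_i(t)\le M_i$ for all $t$. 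Finally, a short algebraic estimate shows $\psi_i^* = b_i/(k_i-a_i)\le (a_i+b_i)/k_i$ (equivalently $a_i(k_i-a_i-b_i)\ge 0$, which holds by (K1)), so $\psi_i\le\eps_i$; and $\eps_i<1$ because $\psi_i(0)<1$ by (K2) and $(a_i+b_i)/k_i<1$ by (K1). This is exactly \eqref{eq:est-ki}, and $\varphi_i(t)\|e_i(t)\|<1$ places $e_i$ inside $\cF_{\varphi_i}$.

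I expect the main obstacle to be bookkeeping rather than any deep difficulty: reducing the coupled derivative to the clean one-variable form $\dot\phi_i\le 2\psi_i[(a_i-k_i)\psi_i+b_i]$ requires routing the cross term through $\psi_{i+1}<1$ and the $\|\cdot\|_\infty$ bounds, and the maximum argument must be phrased on the $C^1$ function $\phi_i$ rather than on $\psi_i=\sqrt{\phi_i}$, whose derivative is ill-behaved at zeros of $e_i$. The one spot demanding care is the passage from the sharp threshold $\psi_i^*$ to the stated, slightly looser constant $(a_i+b_i)/k_i$, where (K1) is used a second time.
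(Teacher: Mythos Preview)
Your proof is correct and follows essentially the same route as the paper: backward induction from $i=r-1$, differentiate $\varphi_i^2\|e_i\|^2$, bound the cross term via Cauchy--Schwarz and the inductive hypothesis $\varphi_{i+1}\|e_{i+1}\|<1$, and close with a last-exit-time contradiction. The only noteworthy difference is that the paper coarsens the derivative estimate one step further to obtain directly $\tfrac12\dot\phi_i\le(a_i+b_i-k_i\psi_i)\psi_i$, whose zero is exactly $(a_i+b_i)/k_i$, whereas you keep the sharper form $\tfrac12\dot\phi_i\le\psi_i[(a_i-k_i)\psi_i+b_i]$ with threshold $b_i/(k_i-a_i)$ and then use (K1) to pass to the stated $\eps_i$; both are fine, and your version has the mild advantage that it does not implicitly rely on $\psi_i\le 1$ when estimating the $\dot\varphi_i/\varphi_i$ term.
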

\begin{proof}
By induction we may assume that $\varphi_{i+1}(t)\|e_{i+1}(t)\|<1$ for all $t\in[0,\omega)$. Then
\begin{align*}
   & \tfrac12 \ddt \varphi_i(t)^2 \|e_i(t)\|^2 = \dot \varphi_i(t) \varphi_i(t) \|e_i(t)\|^2 \\
    &\quad +  \varphi_i(t)^2 e_i(t)^\top \big(e_{i+1} - k_i e_i(t)\big)\\
    &\le \left(\frac{\dot \varphi_i(t)}{\varphi_i(t)} - k_i\right) \varphi_i(t)^2 \|e_i(t)\|^2 + \frac{\varphi_i(t)^2 \|e_i(t)\|}{\varphi_{i+1}(t)}\\
    &\le \left(\left\|\tfrac{\dot\varphi_i}{ \varphi_{i}}\right\|_\infty +  \left\|\tfrac{\varphi_i}{\varphi_{i+1}}\right\|_\infty - k_i \varphi_i(t) \|e_i(t)\|\right)  \varphi_i(t)\|e_i(t)\|.
\end{align*}
for all $t\in[0,\omega)$. Seeking a contradiction, assume that there exists $t_1\in[0,\omega)$ with $\varphi_i(t_1)\|e_i(t_1)\|>\eps_i$. Set $t_0:=\sup\setdef{t\in[0,t_1)}{\varphi_i(t)\|e_i(t)\| = \eps_i}$, which is well defined by $\varphi_i(0)\|e_i(0)\| \le \eps_i$. Then the above estimate implies $\tfrac12 \ddt \varphi_i(t)^2 \|e_i(t)\|^2\le 0$ for all $t\in[t_0,t_1]$, whence
\[
    \eps_i = \varphi_i(t_0)\|e_i(t_0)\| \ge \varphi_i(t_1)\|e_i(t_1)\| > \eps_i,
\]
a contradiction. This completes the proof.
\end{proof}

The above conditions~(K1) and~(K2) will be used as sufficient condition on the controller design parameters as in~\eqref{eq:con-param}. We are now in the position to show that the control~\eqref{eq:fun-con} in conjunction with the  internal model~\eqref{eq:int_mod} achieves the control objective.

%\red{Gibt es keine Bedingungen an die $\varphi_i$ abhängig von $\varphi_1$?? Das war bei unserem 2013er Paper der Fall und war intuitiv verständlich...}

\begin{theorem}\label{Thm:FunCon}
Consider a system~\eqref{eq:System-lin} with $(A,B,C)\in\Sigma_{m,r}$, let $\alpha(s)\in\R[s]$ be a monic polynomial satisfying condition~\eqref{eq:prop_alpha} and $(\tilde A, \tilde B,\tilde C, I_m)$ be an internal model of the class $\cR(\alpha)$, resulting in the interconnection~\eqref{eq:Intercon}. Let $x^0\in\R^n$, $z^0\in\R^{mp}$ be initial values, $y_{\rm ref}\in \cR(\alpha)$ be a reference signal, $\varphi_1\in\Phi$ define the desired performance funnel for the tracking error and choose  $\varphi_2,\ldots,\varphi_r\in \Phi$ and $k_1,\ldots,k_{r-1}>0$ such that conditions~(K1) and~(K2) are satisfied. Then the application of the funnel controller~\eqref{eq:fun-con} %with any $\theta^0>0$
to the interconnection~\eqref{eq:Intercon} yields an
initial-value problem which has a unique maximal solution
 $(x,z):[0,\omega)\to\R^n\times\R^{mp}$, $\omega\in(0,\infty]$,  with the following properties:
	\begin{enumerate}[(i)]
        \item global existence: $\omega = \infty$;
		\item all errors evolve uniformly in their respective performance funnels, that is for all $i=1,\ldots,r$ there exists $\eps_i \in (0,1)$ such that for all $t\ge 0$ we have $\varphi_i(t) \|e_i(t)\| \le \eps_i$;
		\item all signals $x, z, u$ and $k$ in the closed-loop system are bounded;
        \item if $k_r>0$ is sufficiently large, then the tracking error and its first $r-1$ derivatives converge to zero, i.e.,
        \[
            \forall\, i=0,\ldots,r-1:\ \lim_{t\to\infty} e^{(i)}(t) = 0;
        \]
	\end{enumerate}
\end{theorem}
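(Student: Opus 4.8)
The plan is to use the internal model to turn the tracking task into a stabilization task and then to run the funnel argument on the resulting error system. By \Cref{Lem:Prop_Intercon} the interconnection~\eqref{eq:Intercon} lies in $\Sigma_{m,r}$ and is minimum phase. The decisive structural fact is that, since $y_{\rm ref}\in\cR(\alpha)$ and the internal model carries the factor $1/\alpha$, the reference can be produced with vanishing external input: using~\eqref{eq:prop_alpha} (the roots of $\alpha$ are not transmission zeros of the interconnection) one solves the associated regulator equations and obtains a trajectory $(x^*,z^*)$ of~\eqref{eq:Intercon} with $w\equiv0$ whose output equals $y_{\rm ref}$. In the deviation coordinates $(\tilde x,\tilde z):=(x-x^*,z-z^*)$ the tracking error $e=y-y_{\rm ref}$ is therefore the output of the \emph{same} system~\eqref{eq:Intercon} driven by $w$. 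Writing this error system in the Byrnes--Isidori form~\eqref{eq:BIF}, now with $e$ in place of $y$, gives
\[
e^{(r)}=\textstyle\sum_{i=1}^{r}R_ie^{(i-1)}+S\eta+\Gamma w,\qquad \dot\eta=Q\eta+Pe,
\]
with $\sigma(Q)\subseteq\C_-$, which is the reduction to a stabilization problem announced after~\eqref{eq:con-param}.

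\textbf{Existence and funnel confinement.} Since $e,\dot e,\dots,e^{(r-1)}$ are state components of the form above, the signals $e_i$ and the gain $k$ of~\eqref{eq:fun-con} are (locally Lipschitz) functions of the state and of the known reference on the open set where $\varphi_r(t)\|e_r(t)\|<1$; hence the closed loop has a unique maximal solution on some interval $[0,\omega)$. On $[0,\omega)$ we have $\varphi_r\|e_r\|<1$, so by \Cref{lem:epsi} all $e_i$ evolve in their funnels and, because $\sigma(Q)\subseteq\C_-$ and $e=e_1$ is bounded, $\eta$ is bounded as well. Differentiating $\tfrac12\varphi_r^2\|e_r\|^2$ along the solution, using $w=-ke_r$, the positive definiteness of $\Gamma$ (so $e_r^\top\Gamma e_r\ge\gamma\|e_r\|^2$ with $\gamma>0$), and the resulting uniform bound $M$ on $\|S\eta+\sum_{i<r}N_ie_i\|$, one finds that the term $-k\gamma\|e_r\|^2$ governs the estimate; since $k=k_r/(1-\varphi_r^2\|e_r\|^2)\to\infty$ as $\varphi_r\|e_r\|\to1$, this term dominates near the funnel boundary. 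Consequently $\varphi_r(t)\|e_r(t)\|\le\eps_r$ for some $\eps_r\in(0,1)$ and all $t$, which proves~(ii) through \Cref{lem:epsi}, yields boundedness of the deviation state, of $w$ and of $k$, i.e.~(iii), and, the solution remaining in a compact subset of the domain, forces $\omega=\infty$, i.e.~(i).

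\textbf{Asymptotic convergence.} For~(iv) I assemble the full error state $\chi=(e_1^\top,\dots,e_r^\top,\eta^\top)^\top$, which by $\dot e_i=-k_ie_i+e_{i+1}$, the relation $\dot e_r=-(\Gamma k-N_r)e_r+\sum_{i<r}N_ie_i+S\eta$ (for suitable constant matrices $N_1,\dots,N_r$) and $\dot\eta=Q\eta+Pe_1$ satisfies a linear time-varying system with bounded coefficients and $k(t)\in[k_r,\bar k]$. Since the map $(e,\dot e,\dots,e^{(r-1)})\mapsto(e_1,\dots,e_r)$ is invertible, it suffices to show $\chi\to0$. I intend to establish this with a weighted Lyapunov function $W=\sum_{i=1}^{r}a_i\tfrac12\|e_i\|^2+\eta^\top\Pi\eta$, where $\Pi\succ0$ solves $Q^\top\Pi+\Pi Q=-I$: the diagonal terms $-a_ik_i\|e_i\|^2$ ($i<r$), $-a_rk\,e_r^\top\Gamma e_r\le-a_rk_r\gamma\|e_r\|^2$ and $-\|\eta\|^2$ must dominate the indefinite couplings $a_ie_i^\top e_{i+1}$, $a_re_r^\top N_ie_i$, $a_re_r^\top S\eta$ and $2\eta^\top\Pi Pe_1$. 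The strategy is to fix $\Pi$ and $a_1,\dots,a_{r-1}$ (using~(K1)) to control the couplings among $e_1,\dots,e_{r-1},\eta$, then to take $a_r$ small enough to absorb the $e_r$-to-$(e_i,\eta)$ cross terms into those diagonal terms, and finally to choose $k_r$ large enough that $a_rk_r\gamma$ dominates the remaining $e_r$-contributions, so that $\dot W\le-cW$ for some $c>0$. Exponential decay of $\chi$ then gives $e_i\to0$ for all $i$ and hence $e^{(i)}\to0$, $i=0,\dots,r-1$.

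I expect this last step to be the main obstacle. In contrast to the confinement argument, the $e_r$-subsystem is fed back into the remaining state through the order-one couplings $N_i,S,P$, so neither a cascade nor a pure small-gain argument applies directly; the delicate point is to verify that one fixed choice of the weights $a_i$ together with a sufficiently large $k_r$ renders $\dot W$ negative definite despite these couplings.
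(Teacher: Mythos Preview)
Your treatment of (i)--(iii) is essentially the paper's: reduce to the error system via a reference trajectory of the interconnection with $w\equiv 0$, pass to the Byrnes--Isidori form with output $e$, apply \Cref{lem:epsi} for $e_1,\dots,e_{r-1}$, bound $\eta$ from Hurwitzness of $Q$, and run the standard funnel estimate on $\tfrac12\ddt\,\varphi_r^2\|e_r\|^2$ to obtain $\varphi_r\|e_r\|\le\eps_r$, boundedness of all signals, and $\omega=\infty$.

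For (iv) the paper takes a different and considerably shorter route. Rather than building a weighted quadratic Lyapunov function on $(e_1,\dots,e_r,\eta)$, it observes that $e_r=C_{\rm ic}\,p_r(A_{\rm ic})\,x_e=:\hat C x_e$, so that $(A_{\rm ic},B_{\rm ic},\hat C)$ has relative degree one with high-frequency gain $\Gamma$, and is minimum phase because
\[
\det\begin{bmatrix}A_{\rm ic}-\lambda I&B_{\rm ic}\\ \hat C&0\end{bmatrix}=p_r(\lambda)\,\det\begin{bmatrix}A_{\rm ic}-\lambda I&B_{\rm ic}\\ C_{\rm ic}&0\end{bmatrix}
\]
and $p_r$ is Hurwitz. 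A classical high-gain result for relative-degree-one minimum-phase linear systems then gives $x_e\to 0$ under $w=-k(t)e_r$ with $k(t)\ge k_r$ sufficiently large, and $e^{(i)}=C_{\rm ic}A_{\rm ic}^i x_e$ finishes the argument. Your Lyapunov construction can also be made to work---$(e_1,\dots,e_{r-1},\eta)$ is an exponentially stable cascade with input $e_r$, hence admits an ISS Lyapunov function, and large $k_r$ closes the loop---but two points deserve care: condition (K1) only relates $k_i$ to the funnel functions, not to $N_i,S,P$, so it does not by itself fix the weights $a_i$; and taking $a_r$ small pushes the cascade cross term $a_{r-1}e_{r-1}^\top e_r$ onto the $e_r$-diagonal with coefficient of order $a_{r-1}/(a_r k_{r-1})$, which blows up as $a_r\to 0$ and must be absorbed by the final choice of $k_r$, so the order of quantifiers is exactly as you suspect. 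The paper's approach buys a one-line reduction to a known theorem and makes transparent why $k_1,\dots,k_{r-1}>0$ (i.e., $p_r$ Hurwitz) is structurally the right hypothesis; your approach is more self-contained and would, if carried out, yield explicit (conservative) lower bounds on $k_r$.
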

\begin{proof}
\emph{Step 1}: We show existence and uniqueness of a maximal solution of the closed-loop system consisting of the controller~\eqref{eq:fun-con} applied to~\eqref{eq:Intercon}. Define the polynomials
\[
   p_1(s) = 1,\quad  p_i(s) = (s+k_1) \cdots (s+k_{i-1}),\quad i=2,\ldots,r,
\]
and observe that for $e_i$ as in~\eqref{eq:fun-con} we have that $e_i = p_i(\ddt) e$. Further define the relatively open set~$\cD$ in~\eqref{eq:setD}
\begin{figure*}[bt]
\begin{equation}\label{eq:setD}
\cD := \setdef{(t,x,z)\in\R_{\ge 0}\times \R^n\times\R^{mp}}{\varphi_i(t)\|C p_{i}(A) x - p_{i}(\ddt) y_{\rm ref}(t)\| < 1,\ i=1,\ldots,r}
\end{equation}
\end{figure*}
and the function $F:\cD\to \R^n\times\R^{mp}$ by
\begin{align*}
   & F(t,x,z) \\
   & = \begin{pmatrix}
       Ax + B \tilde C z - \frac{k_r B (C p_{r}(A) x - p_{r}(\ddt) y_{\rm ref}(t))}{1-\varphi_r(t)^2 \|C p_{r}(A) x - p_{r}(\ddt) y_{\rm ref}(t)\|^2}\\
       \tilde A z - \frac{k_r \tilde B (C p_{r}(A) x - p_{r}(\ddt) y_{\rm ref}(t))}{1-\varphi_r(t)^2 \|C p_{r}(A) x - p_{r}(\ddt) y_{\rm ref}(t)\|^2}\\
        \|C p_{r}(A) x - p_{r}(\ddt) y_{\rm ref}(t)\|^2
   \end{pmatrix}.
\end{align*}
Then the closed-loop system is equivalent to
\[
   \begin{pmatrix}
       \dot x(t)\\ \dot z(t) %\\ \dot \theta(t)
   \end{pmatrix}
   = F(t,x(t),z(t)),\quad \begin{pmatrix}
       x(0)\\ z(0) %\\ \theta(0)
   \end{pmatrix} = \begin{pmatrix}
       x^0\\ z^0 %\\ \theta^0
   \end{pmatrix}.
\]
Since it follows from Assumption~\ref{Ass:rel_deg} that $y^{(i)}(t) = CA^i x(t)$ for $i=0,\ldots,r-1$, it is clear that
$(0,x^0,z^0)\in\cD$. Furthermore, $F$ is measurable in $t$
and locally Lipschitz in $(x,z)$. Hence, by the theory of
ordinary differential equations (see e.g.~\cite[\S~10, Thm.~XX]{Walt98}) there
exists a unique maximal solution $(x,z):[0,\omega)\to\R^n\times\R^{mp}$, $\omega\in(0,\infty]$, of~\eqref{eq:Intercon},~\eqref{eq:fun-con} satisfying the initial conditions.
Moreover, the closure of the graph of this solution is not a compact subset of~$\cD$. We also note that, by definition of~$\cD$, $\varphi_i(t) \|e_i(t)\| < 1$ for all $t\in [0,\omega)$ and all $i=1,\ldots,r$.

\emph{Step 2}: We show~(ii) for $i=1,\ldots,r-1$ on $[0,\omega)$. Since $\varphi_r(t) \|e_r(t)\| < 1$ for all $t\in[0,\omega)$ was shown in Step 1, this follows directly from Lemma~\ref{lem:epsi}. %, it is a straightforward consequence of the estimate~\eqref{eq:est-ki} and conditions~(K1) and~(K2) that there exists $\eps_i\in(0,1)$ such that $\varphi_i(t) \|e_i(t)\| \le \eps_i$ for all $t\in [0,\omega)$.

\emph{Step 3}: We derive a differential equation for $e_r$.
%First, factorize $\alpha(s) = \alpha^-(s) \alpha^+(s)$ with $\alpha^-(s) = 0$ if, and only if, ${\rm Re}\, s < 0$, and $\alpha^+(s) = 0$ if, and only if, ${\rm Re}\, s \ge 0$.  Then it is clear that
%\[
%    \cR(\alpha) = \cR(\alpha^-) \oplus \cR(\alpha^+),
%\]
%so we may write
%\[
%    y_{\rm ref} =  y_{\rm ref}^- +  y_{\rm ref}^+,\quad  y_{\rm ref}^\pm \in \cR(\alpha^\pm).
%\]
By~\cite[Lem.~5.1.2]{Ilch93}\footnote{The result of~\cite[Lem.~5.1.2]{Ilch93} requires $\alpha(s)$ to have only roots with non-negative real parts, however a careful inspection of the proof reveals that condition~\eqref{eq:prop_alpha} suffices.} there exists $v\in C^1(\R_{\ge 0},\R^{n+mp})$ such that, using the notation from Lemma~\ref{Lem:Prop_Intercon},
\[
    \dot v(t) = A_{\rm ic} v(t),\ y_{\rm ref}(t) = C_{\rm ic} v(t).
\]
Set $x_e(t) := \begin{smallpmatrix} x(t)\\ z(t)\end{smallpmatrix} - v(t)$, then
\[
    \dot x_e(t) = A_{\rm ic} x_e(t)+ B_{\rm ic} w(t),\quad e(t) = C_{\rm ic} x_e(t).
\]
By Lemma~\ref{Lem:Prop_Intercon}, $(A_{\rm ic}, B_{\rm ic}, C_{\rm ic}) \in\Sigma_{m,r}$ and hence the above system can be transformed into the form~\eqref{eq:BIF}, i.e., we have
\begin{align*}
 e^{(r)}(t) & = \sum_{i=1}^{r} R_i  e^{(i-1)}(t) + S \eta(t) + \Gamma w(t), \\
\dot \eta(t) & = Q \eta(t) + P  e(t).
\end{align*}
Now let $\mu_1,\ldots,\mu_{r-1}\in\R$ be such that
$
    p_r(s) = s^{r-1} + \sum_{i=1}^{r-1} \mu_{i} s^{i-1}.
$
Then we find
\begin{align*}
    \dot e_r(t) &= \ddt p_r(\ddt) e(t) = e^{(r)}(t) + \sum_{i=1}^{r-1} \mu_{i} e^{(i)}(t)\\
    &= \sum_{i=1}^r R_i  e^{(i-1)}(t) + S\eta(t) + \Gamma w(t) + \sum_{i=1}^{r-1} \mu_{i} e^{(i)}(t).
\end{align*}

\emph{Step 4}: We show (ii) for $i=r$ or, equivalently, that $k$ is bounded on $[0,\omega)$. First observe that, since $y_{\rm ref}$ and $e$ are bounded, and $Q$ is Hurwitz, it follows that $\eta$ is bounded. Furthermore, a straightforward induction utilizing~\eqref{eq:fun-con} gives that
\[
    e^{(i)} = e_{i+1} - \sum_{j=1}^i k_j e_j^{(i-j)} = e_{i+1} + \sum_{j=1}^i c_{i,j} e_j
\]
for some $c_{i,j}\in\R$, $i=1,\ldots,r-1$, $j=1,\ldots,i$. Therefore, since $e_1,\ldots,e_r$ are bounded on $[0,\omega)$, it follows that $e,\dot e, \ldots, e^{(r-1)}$ are bounded on $[0,\omega)$. %Hence, invoking boundedness of $\big(y_{\rm ref}^-\big)^{(i)}$ for all $i\in\N$ by $y_{\rm ref}^- \in \cR(\alpha^-)$, it follows that $\tilde e,\ddt \tilde e, \ldots, \tilde e^{(r-1)}$ are bounded on $[0,\omega)$.
Hence, there exists $C>0$ such that
\begin{align*}
    & \tfrac12 \ddt \|e_r(t)\|^2 \le  - k(t) e_r(t)^\top (\Gamma + \Gamma^\top) e_r(t) + C \|e_r(t)\|\\
    &\le \big(C-k(t) \gamma \|e_r(t)\| \big) \|e_r(t)\|,
\end{align*}
where $\gamma$ is the smallest eigenvalue of the positive definite matrix $\Gamma + \Gamma^\top$. Then, with standard arguments in funnel control as used e.g.\ in~\cite{BergLe18} %and using that $\theta(t)\ge \theta^0$,
it follows that there exists $\eps_r\in (0,1)$ such that $\varphi_r(t) \|e_r(t)\| \le \eps_r$ for all $t\in[0,\omega)$.

\emph{Step 5}: We show $\omega = \infty$. Seeking a contradiction, assume that $\omega< \infty$. Then, by Steps~2--4, it follows that the graph of the solution $(x,z)$ is a compact subset of~$\cD$, which contradicts the findings of Step~1.

\emph{Step 6}: We show that if $k_r>0$ is large enough, then then $\lim_{t\to\infty} x_e(t) = 0$. Invoking $e^{(i)} = A_{\rm ic}^i x_e$ for $i=0,\ldots,r-1$ it follows that
\[
    e_r = p_r(\ddt) e = C_{\rm ic} p_r(A_{\rm ic}) x_e.
\]
Hence, with $\hat C := C_{\rm ic} p_r(A_{\rm ic})$ we find that the system
\begin{equation}\label{eq:sys-xe}
    \dot x_e(t) = A_{\rm ic} x_e(t) + B_{\rm ic} w(t),\quad e_r(t) = \hat C x_e(t)
\end{equation}
has strict relative degree one as $\hat C B_{\rm ic} = C_{\rm ic} p_r(A_{\rm ic}) B_{\rm ic} = C_{\rm ic} A_{\rm ic}^{r-1} B_{\rm ic} = \Gamma$ by Lemma~\ref{Lem:Prop_Intercon}. Furthermore, the system is minimum phase as
\[
    \det \begin{bmatrix} A_{\rm ic} - \lambda I_n & B_{\rm ic} \\ C_{\rm ic} p_r(A_{\rm ic}) & 0\end{bmatrix} = p_r(\lambda) \det \begin{bmatrix} A_{\rm ic} - \lambda I_n & B_{\rm ic} \\ C_{\rm ic} & 0\end{bmatrix} \neq 0
\]
for all $\lambda\in\C$ with ${\rm Re}\, \lambda \ge 0$, where we have used that $p_r(s)$ is Hurwitz and $(A_{\rm ic}, B_{\rm ic}, C_{\rm ic})$ is minimum phase by Lemma~\ref{Lem:Prop_Intercon}. Then, since $k(t)\ge k_r$, it follows from classical results (see e.g.~\cite[Rem.~2.2.5]{Ilch93}) that there exists $k_r^*>0$ large enough such that for all $k_r\ge k_r^*$ the control $w(t) = - k(t) e_r(t)$ applied to~\eqref{eq:sys-xe} achieves that $\lim_{t\to\infty} x_e(t) = 0$.
%Using analogue techniques as for standard high-gain adaptive stabilization of linear systems as e.g.\ in~\cite[Thm.~6.7]{Hack17} (and the estimate $k(t)\ge \theta(t)$), it follows that the control $w(t) = - k(t) e_r(t)$ applied to~\eqref{eq:sys-xe} achieves that~$\theta$ is bounded on $[0,\omega)$ and, moreover, $\lim_{t\to\infty} x_e(t) = 0$.

\emph{Step 7}: Assertions~(i)--(iii) are shown and it remains to prove~(iv). This follows directly from the observation that $e^{(i)} = A_{\rm ic}^i x_e$ for $i=0,\ldots,r-1$ and Step~6.
\end{proof}

Statement~(iv) of Theorem~\ref{Thm:FunCon} asserts achievement of asymptotic tracking, provided that the parameter $k_r$ is sufficiently large. In order to relax this, future work will concentrate on choosing this parameter adaptively.

\section{Illustrative simulations}
To illustrate the benefits of using internal models in combination with funnel control, comparative simulations have been implemented for the following third-order system:
\begin{equation}\label{eq:example system}
\begin{aligned}
 \dot x(t)&= \begin{bmatrix}  0   &  1  &   0 \\ -3  &   4  &   0 \\ -5  &  0  &  -1 \end{bmatrix} x(t) + \begin{pmatrix} 0\\1\\0\end{pmatrix} u(t), \quad x(0) = \begin{pmatrix} 0\\0\\5\end{pmatrix}\\
    y(t) &= \begin{pmatrix} 1 & 0 & 0\end{pmatrix} x(t).
\end{aligned}
\end{equation}
The system is unstable with eigenvalues $\{-1,3,1\}$, but minimum-phase. Moreover, it has relative degree $r=2$ and positive high-frequency gain $\Gamma = 1$, thus it belongs to $\Sigma_{1,2}$. It is assumed that the instantaneous values $y(t)$ and $\dot{y}(t)$ are available for feedback. For $\omega_0 = 10\pi$, we choose the reference signal $y_{\rm ref}(t) = 2 + \sin(\omega_0 t)$ with derivative $\dot{y}_{\rm ref}(t) = 10\pi\cos(\omega_0 t)$, which is clearly an element of the class $\mathcal{R}(\alpha)$ for $\alpha(s) = s^3 + s \omega_0^2$ with roots $\lambda \in \{0,\pm \jmath \omega_0\}$. Laplace expansion yields
\[
  \forall\, \lambda\in\C:\  \det{\tiny \begin{bmatrix}  -\lambda   &  1  &   0 & 0\\ -3  &   4 - \lambda &   0 & 1\\ -5  &  0  &  -1 - \lambda & 0 \\ 1 & 0 & 0 & 0\end{bmatrix}} = 1 + \lambda,
\]
which shows that \eqref{eq:prop_alpha} is satisfied. Hence, according to Section~\ref{Sec:IntMod}, an appropriate internal model of the form~\eqref{eq:int_mod} is given by (for design details, see \cite[Sec.~7.3.2]{Hack17})
\begin{equation}\label{eq:IM}
\begin{aligned}
    \dot z(t)&= \begin{bmatrix} 0 & 1 & 0 \\ 0 & 0 & 1 \\ 0 & -\omega_0^2 & 0 \end{bmatrix} z(t) + \begin{pmatrix} 0\\0\\1\end{pmatrix} w(t), \quad z(0) = \begin{pmatrix} 0\\0\\0\end{pmatrix}\\
    u(t) &= \begin{pmatrix} 27 & (27-\omega_0^2) & 0\end{pmatrix} z(t) + w(t).
\end{aligned}
\end{equation}

Example system~\eqref{eq:example system} in conjunction with internal model~\eqref{eq:IM} and under funnel control~\eqref{eq:fun-con} has been implemented in Matlab/Simulink (R2023b) using the solver \texttt{ode4 (Runge-Kutta)} with fixed step-size $h=\SI{0.1}{\milli\second}$. The controller tuning parameters were selected as $k_1=74.13$ and $k_2 = 100$ (note that the selections are not trivial as those depend on the funnel boundaries and vice-versa; for details see~\cite{BergDenn23bpp}). Moreover, exponential funnel boundaries were implemented as follows $\psi_1(t) = 1/\varphi_1(t) = (\Lambda_1 - \lambda_1)\exp(-t/T_1) + \lambda_1$ and $\psi_2(t) = 1/\varphi_2(t) = (\Lambda_2 - \lambda_2)\exp(-t/T_2) + \lambda_2$ with $\Lambda_1=10$, $\lambda_1=0.2$, $T_1 = 0.1$\,s and $\Lambda_2=369.76$, $\lambda_2=10.4$, $T_2 = 0.1$\,s, respectively.

Comparative simulation results are plotted in Fig.~\ref{fig:simulation results} for (i) closed-loop system \eqref{eq:fun-con},\eqref{eq:example system}  [\DRAWLINE{CYAN} w/o IM: funnel controller~\eqref{eq:fun-con} is directly applied to example system~\eqref{eq:example system} without internal model, i.e.~$u=w$] and (ii) closed-loop system \eqref{eq:fun-con}, \eqref{eq:IM},\eqref{eq:example system} [\DRAWLINE{BLUE}: funnel controller~\eqref{eq:fun-con} and internal model~\eqref{eq:IM} are applied to example system~\eqref{eq:example system}]. From top to bottom, the plotted time series in the six subplots are: reference $y_{\rm ref}$ and output $y$, boundary $\pm \psi_1:=\pm\tfrac{1}{\varphi_1}$ and error $e = e_1$,  reference derivative $\dot{y}_{\rm ref}$ and output derivative $\dot{y}$, boundary $\pm \psi_2:=\pm\tfrac{1}{\varphi_2}$ and auxiliary error $e_2$, gain $k$, and, finally, controller output $w$ and control action $u$. The time series plots show that with internal model [\DRAWLINE{BLUE}] and without internal model [\DRAWLINE{CYAN} w/o IM], the errors $e_1$ and $e_2$ evolve within their respective funnel regions. However, for the closed-loop system~\eqref{eq:fun-con},\eqref{eq:example system} without internal model [\DRAWLINE{CYAN} w/o IM], both errors do \emph{not} tend to zero but rather oscillate; which in turn leads to significant oscillations in gain $k$ as well as in controller output $w$. In contrast to that, closed-loop system \eqref{eq:fun-con},\eqref{eq:example system},\eqref{eq:IM} with internal model [\DRAWLINE{BLUE}] achieves \emph{asymptotic tracking} with less oscillations in gain $k$ and controller output $w$.

\begin{remark}[Measurement noise]
The use of internal models does not only guarantee asymptotic tracking, but also achieves larger distances to the funnel boundaries. Hence, funnel control with internal models is intrinsically less sensitive to measurement noise. Future research should focus on a rigorous proof of this behavior.
\end{remark}

\begin{figure}
\includegraphics[width=\linewidth]{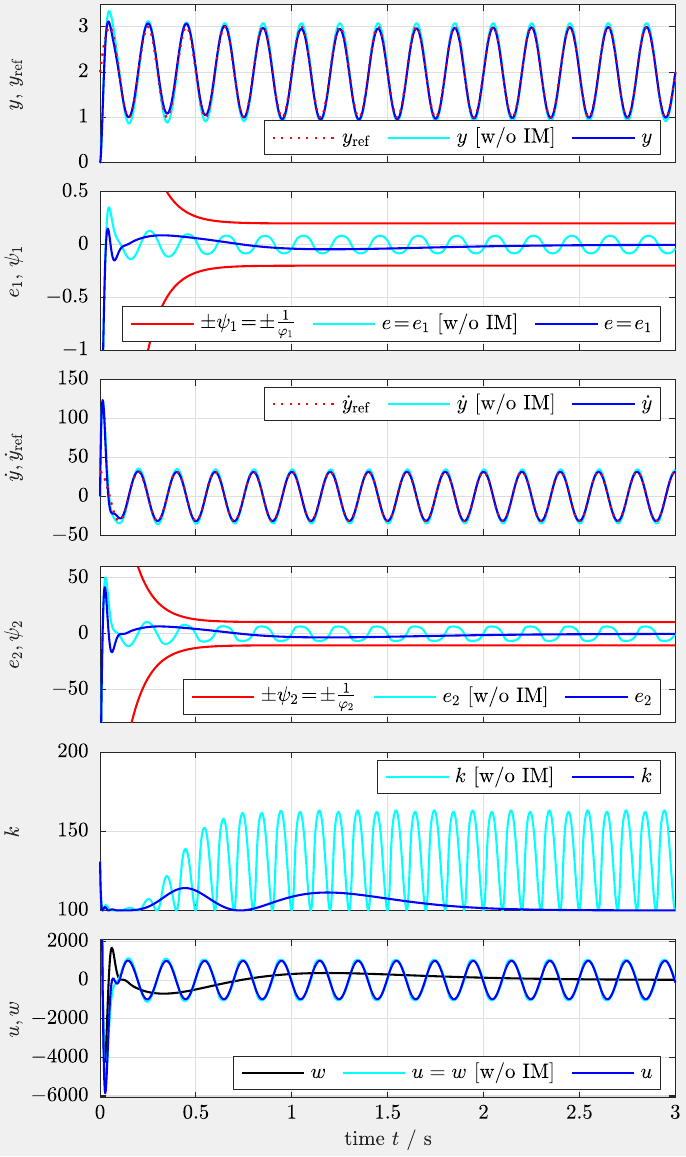}
\caption{Simulation results for system \eqref{eq:example system} using funnel controller \eqref{eq:fun-con} without internal model [\DRAWLINE{CYAN} w/o IM] and funnel controller \eqref{eq:fun-con} with internal model~\eqref{eq:IM} [\DRAWLINE{BLUE}].}
\label{fig:simulation results}
\end{figure}

\bibliographystyle{IEEEtran}
\bibliography{MST,LMRES_Bibliography,local}

\end{document}